\pdfoutput=1
\documentclass[reqno, 12pt]{amsart} 
\def\ssign{\textsection\nobreak\hspace{1pt plus 0.3pt}}
\makeatletter
\let\origsection=\section 
\def\mysection{\@mystartsection{section}{1}\z@{.7\linespacing\@plus\linespacing}{.5\linespacing}{\normalfont\scshape\centering\ssign}}
\def\section{\@ifstar{\origsection*}{\mysection}}
\def\appendix{\par\c@section\z@ \c@subsection\z@
   \let\sectionname\appendixname
   \let\section=\origsection  
   \def\thesection{\@Alph\c@section}}
\def\@mystartsection#1#2#3#4#5#6{\if@noskipsec \leavevmode \fi
 \par \@tempskipa #4\relax
 \@afterindenttrue
 \ifdim \@tempskipa <\z@ \@tempskipa -\@tempskipa \@afterindentfalse\fi
 \if@nobreak \everypar{}\else
     \addpenalty\@secpenalty\addvspace\@tempskipa\fi
 \@dblarg{\@mysect{#1}{#2}{#3}{#4}{#5}{#6}}}
\def\@mysect#1#2#3#4#5#6[#7]#8{\edef\@toclevel{\ifnum#2=\@m 0\else\number#2\fi}\ifnum #2>\c@secnumdepth \let\@secnumber\@empty
  \else \@xp\let\@xp\@secnumber\csname the#1\endcsname\fi
  \@tempskipa #5\relax
  \ifnum #2>\c@secnumdepth
    \let\@svsec\@empty
  \else
    \refstepcounter{#1}\edef\@secnumpunct{\ifdim\@tempskipa>\z@ \@ifnotempty{#8}{\@nx\enspace}\else
        \@ifempty{#8}{.}{\@nx\enspace}\fi
    }\@ifempty{#8}{\ifnum #2=\tw@ \def\@secnumfont{\bfseries}\fi}{}\protected@edef\@svsec{\ifnum#2<\@m
        \@ifundefined{#1name}{}{\ignorespaces\csname #1name\endcsname\space
        }\fi
      \@seccntformat{#1}}\fi
  \ifdim \@tempskipa>\z@ \begingroup #6\relax
    \@hangfrom{\hskip #3\relax\@svsec}{\interlinepenalty\@M #8\par}\endgroup
    \ifnum#2>\@m \else \@tocwrite{#1}{#8}\fi
  \else
  \def\@svsechd{#6\hskip #3\@svsec
    \@ifnotempty{#8}{\ignorespaces#8\unskip
       \@addpunct.}\ifnum#2>\@m \else \@tocwrite{#1}{#8}\fi
  }\fi
  \global\@nobreaktrue
  \@xsect{#5}}
\makeatother

\usepackage{amsmath,amssymb,amsthm}
\usepackage{mathrsfs}
\usepackage{mathabx}\changenotsign
\usepackage{dsfont}

\usepackage{xcolor}
\usepackage[backref]{hyperref}
\hypersetup{
    colorlinks,
    linkcolor={red!60!black},
    citecolor={green!60!black},
    urlcolor={blue!60!black}
}

\usepackage[open,openlevel=2,atend]{bookmark}

\usepackage[abbrev,msc-links,backrefs]{amsrefs}
\BibSpec{book}{+{} {\PrintPrimary} {transition}
	+{,} { \textit} {title}
	+{.} { } {part}
	+{:} { \textit} {subtitle}
	+{,} { \PrintEdition} {edition}
	+{} { \PrintEditorsB} {editor}
	+{,} { \PrintTranslatorsC} {translator}
	+{,} { \PrintContributions} {contribution}
	+{,} { } {series}
	+{,} { \voltext} {volume}
	+{,} { } {publisher}
	+{,} { } {organization}
	+{,} { } {address}
	+{,} { \PrintDateB} {date}
	+{,} { } {status}
	+{,} { \PrintDOI} {doi}
	+{} { \parenthesize} {language}
	+{} { \PrintTranslation} {translation}
	+{;} { \PrintReprint} {reprint}
	+{.} { } {note}
	+{.} {} {transition}
	+{} {\SentenceSpace \PrintReviews} {review}
}

\usepackage{doi}

\renewcommand{\PrintDOI}[1]{\doi{#1}}

\usepackage[T1]{fontenc}
\usepackage{lmodern}
\usepackage[babel]{microtype}
\usepackage[english]{babel}

\linespread{1.2}
\usepackage{geometry}
\geometry{left=27.5mm,right=27.5mm, top=25mm, bottom=25mm}

\numberwithin{equation}{section}
\numberwithin{figure}{section}

\usepackage{enumitem}
\def\rmlabel{\upshape({\itshape \roman*\,})}

\def\alabel{\upshape({\itshape \alph*\,})}

\def\nlabel{\upshape({\itshape \arabic*\,})}

\theoremstyle{plain}
\newtheorem{thm}{Theorem}[section]
\newtheorem{fact}[thm]{Fact}
\newtheorem{prop}[thm]{Proposition}
\newtheorem{clm}[thm]{Claim}

\theoremstyle{definition}

\newtheorem{dfn}[thm]{Definition}

\let\theta=\vartheta
\let\rho=\varrho
\let\phi=\varphi

\def\NN{\mathds N}

\def\ccH{{\mathscr{H}}}

\def\ccN{\mathscr{N}}
\def\ccX{\mathscr{X}}
  
\let\polishlcross=\l
\def\l{\ifmmode\ell\else\polishlcross\fi}

\makeatletter
\def\moverlay{\mathpalette\mov@rlay}
\def\mov@rlay#1#2{\leavevmode\vtop{   \baselineskip\z@skip \lineskiplimit-\maxdimen
   \ialign{\hfil$\m@th#1##$\hfil\cr#2\crcr}}}
\newcommand{\charfusion}[3][\mathord]{
    #1{\ifx#1\mathop\vphantom{#2}\fi
        \mathpalette\mov@rlay{#2\cr#3}
      }
    \ifx#1\mathop\expandafter\displaylimits\fi}
\makeatother

\newcommand{\dcup}{\charfusion[\mathbin]{\cup}{\cdot}}

\def\tand{\ \text{and}\ }
\def\qand{\quad\text{and}\quad}

\newcommand{\vrhup}[1]{\scaleobj{0.6}{\scalerel*{\rightharpoonup}{#1}}}
\newcommand{\nrhup}{\mathord{\scaleobj{0.6}{\scalerel*{\rightharpoonup}{x}}}}
\newcommand{\wrhup}{\scaleobj{0.6}{\scalerel*{\rightharpoonup}{W}}}
\def\vseq#1{\ThisStyle{  \mathord{\vbox{\offinterlineskip\ialign{    \hfil##\hfil\cr
    $\SavedStyle{}_{\smash{\vrhup#1}}$\cr
    \noalign{\kern-0.7\scriptspace}
    $\SavedStyle#1$\cr}}}}}
\def\seq#1{\ThisStyle{  \mathord{\vbox{\offinterlineskip\ialign{    \hfil##\hfil\cr
    $\SavedStyle{}_{\smash{\nrhup}}$\cr
    \noalign{\kern-0.5\scriptspace}
    $\SavedStyle#1$\cr}}}}}
\def\wseq#1{\ThisStyle{  \mathord{\vbox{\offinterlineskip\ialign{    \hfil##\hfil\cr
    $\SavedStyle{}_{\smash{\wrhup#1}}$\cr
    \noalign{\kern-0.7\scriptspace}
    $\SavedStyle#1$\cr}}}}}

\let\lra=\longrightarrow
\let\to=\lra

\makeatletter
\newcommand{\pushright}[1]{\ifmeasuring@#1\else\omit\hfill$\displaystyle#1$\fi\ignorespaces}
\newcommand{\pushleft}[1]{\ifmeasuring@#1\else\omit$\displaystyle#1$\hfill\fi\ignorespaces}
\makeatother

\usepackage{tikz}
\usetikzlibrary{calc}

\newcommand{\hedge}[7]{

	\ifx\relax#4\relax
		\def\qoffs{0pt}
	\else
		\def\qoffs{#4}
	\fi

	\def\qhedge{
		($#1+#3!\qoffs!-90:#2-#3$)--
		($#2+#1!\qoffs!-90:#3-#1$)--
		($#3+#2!\qoffs!-90:#1-#2$)--cycle}

	\coordinate (12) at ($#1!\qoffs!90:#2$);
	\coordinate (13) at ($#1!\qoffs!-90:#3$);
	\coordinate (23) at ($#2!\qoffs!90:#3$);
	\coordinate (21) at ($#2!\qoffs!-90:#1$);
	\coordinate (31) at ($#3!\qoffs!90:#1$);
	\coordinate (32) at ($#3!\qoffs!-90:#2$);
	
	\def\nqhedge{
		(13) let \p1=($(13)-#1$), \p2=($(12)-#1$) in
			arc[start angle={atan2(\y1,\x1)}, delta angle={atan2(\y2,\x2)-atan2(\y1,\x1)-360*(atan2(\y2,\x2)-atan2(\y1,\x1)>0)}, x radius=\qoffs, y radius=\qoffs]--
		(21) let \p1=($(21)-#2$), \p2=($(23)-#2$) in
			arc[start angle={atan2(\y1,\x1)}, delta angle={atan2(\y2,\x2)-atan2(\y1,\x1)-360*(atan2(\y2,\x2)-atan2(\y1,\x1)>0)}, x radius=\qoffs, y radius=\qoffs]--
		(32) let \p1=($(32)-#3$), \p2=($(31)-#3$) in
			arc[start angle={atan2(\y1,\x1)}, delta angle={atan2(\y2,\x2)-atan2(\y1,\x1)-360*(atan2(\y2,\x2)-atan2(\y1,\x1)>0)}, x radius=\qoffs, y radius=\qoffs]--cycle}

		\ifx\relax#5\relax
		\def\qlwidth{1pt}
	\else
		\def\qlwidth{#5}
	\fi
	
		\ifx\relax#7\relax
		\fill \nqhedge;
	\else
		\fill[#7]\nqhedge;
	\fi

		\ifx\relax#6\relax
		\draw[line width=\qlwidth,rounded corners=\qoffs]\nqhedge;
	\else
		\draw[line width=\qlwidth,#6]\nqhedge;
	\fi
}

\usepackage{subcaption}
\captionsetup[subfigure]{labelfont=rm}

\begin{document}
\title[Ramsey-type problems for generalised Sidon sets]{Ramsey-type problems for generalised Sidon sets}

\dedicatory{Dedicated to the memory of Tomasz Schoen}

\author[Chr. Reiher]{Christian Reiher}
\address{Fachbereich Mathematik, Universit\"at Hamburg, Hamburg, Germany}
\email{christian.reiher@uni-hamburg.de}

\author[V. R\"{o}dl]{Vojt\v{e}ch R\"{o}dl}
\address{Department of Mathematics, Emory University, Atlanta, USA}
\email{vrodl@emory.edu}

\author[M. Schacht]{Mathias Schacht}
\address{Fachbereich Mathematik, Universit\"at Hamburg, Hamburg, Germany}
\email{schacht@math.uni-hamburg.de}

\thanks{The second author is supported by NSF grant DMS~2300347.}

\keywords{Ramsey theory, integer representations, Sidon sets, girth}
\subjclass[2020]{05D10 (primary), 11B30 (secondary)}

\begin{abstract}
	We establish the existence of generalised Sidon sets enjoying additional Ramsey-type properties, which are motivated by questions of Erd\H os and Newman
	and of Alon and Erd\H os.
\end{abstract}

\maketitle

%\section*{Preamble}

\section{Introduction}
\label{sec:introduction}
In December 2024, we received the sad news of the sudden demise of \textsc{Tomasz Schoen}. Tomek was a brilliant mathematician 
who lived up to his beautiful surname in his research. He made many important contributions to the field of additive number theory, and 
a large part of his research was focused on density problems for arithmetic structures related to the theorems of Roth and Szemer\'edi. 
Among other topics, he was also interested in Ramsey-type questions for the integers.

For a subset $X\subseteq \NN$ and a fixed integer $k\geq 2$, we denote by
$\rho_{X,k}\colon \NN\to\NN$ the function of the number of additive representations with $k$ terms from~$X$,
i.e., for every $n\in\NN$, we set
\[
	\rho_{X,k}(n)=\big|\big\{(x_1,\dots,x_k)\in X^k\colon x_1+\dots +x_k=n \tand x_1\leq \dots\leq x_k\big\}\big|\,.
\]
Moreover, we define
\[
	\rho_{k}(X)=\sup\{\rho_{X,k}(n)\colon n\in\NN\}
\]
and we say $X$ is a \emph{$B_{k,\l}$-set}, if $\rho_{k}(X)=\l$, i.e., if at least one integer has $\l$ ordered $k$-term representations
in $X$ and no integer has more.

The study of $B_{k,\l}$-sets can be traced back to the work of Sidon~\cite{Si32} (see, e.g., the textbook of Halberstam and Roth~\cite{HR83}*{Chapter~II})
and $B_{2,1}$-sets
are usually referred to as \emph{Sidon sets} (or \emph{Sidon sequences}). In general $B_{k,1}$-sets
have the property that all $k$-term sums are distinct and for simplicity we refer to those sets as \emph{$B_k$-sets}.

We are interested in the existence of infinite $B_{k,\l}$-sets with special properties motivated by questions of
Erd\H os  and Newman (see, e.g.,~\cite{E80}) and by Alon and Erd\H os~\cite{AE85}. Below we briefly discuss
these properties and our main result, Theorem~\ref{thm:B-set},
asserts the existence of~$B_{k,\l}$-sets enjoying all those qualities.

\subsection{Ramsey-type property}\label{sec:EN}
Our starting point is a question independently proposed by Erd\H os and
Newman. We shall employ the arrow notation from Ramsey theory and write
\[
	X\lra [k,\l]_r\,,
\]
to signify the statement that for every $r$-colouring $X=X_1\dcup\dots\dcup X_r$ there is some colour class $X_q$ such that
\[
	\rho_{k}(X_q)=\l\,.
\]
In other words, for some colour class $X_q$
there are $\l$ distinct $k$-tuples $(x_i^{(j)})_{i=1}^k\in X^k_q$ with $x_1^{(j)}\leq \dots \leq x_k^{(j)}$ for $j=1,\dots,\l$ such that
\[
	\sum_{i=1}^kx_i^{(1)}=\sum_{i=1}^kx_i^{(2)}=\dots=\sum_{i=1}^kx_i^{(\l)}\,.
\]

We note that,
if $X\lra [k,\l]_r$ holds for every $r\geq 2$, then for every finite colouring of~$X$ there still exists some integer
which can be represented in $\l$ different ways as a sum of~$k$ terms from the same colour class. It might seem plausible, that
for such a partition relation to hold, the set $X$ itself may have to represent some integers in more than $\l$ ways. However, it turned out that
this na\"\i ve idea is false. In fact,  Erd\H os~\cite{E80}
established the existence of an infinite $B_{2,3}$-set $X$ satisfying
\[
	X\lra [2,3]_r
\]
for every $r\geq 2$. This means, that even though no integer can be additively
represented in four different ways by two elements of $X$, for every finite colouring of $X$ there is still
some integer enjoying three different $2$-term representations with all terms from the same colour class. Formulated this way,
this result has some resemblance to Folkman-type results in Ramsey theory for graphs (cf.\ the work of Folkman~\cite{Fo70} and the textbook of Graham, Spencer, and Rothschild~\cite{GRS90}*{\ssign5.3}).
Erd\H os conjectured, that such $B_{2,\l}$-sets exist for every $\l\geq 2$ (and he proved it for $\l$ being a power of $2$ and for $\l$
of the form $\frac{1}{2}\binom{2s}{s}$ for some $s\geq 1$). This conjecture was addressed by Ne\v set\v ril and R\"odl~\cite{NR85}
and here we build on their work and extend it to
$B_{k,\l}$-sets for arbitrary~$k>2$ (see property~\ref{iti:B-set-EN} in Theorem~\ref{thm:B-set} below).

\subsection{Local-global structure}\label{sec:AE}
The second property under consideration is motivated by a question of Alon and Erd\H os~\cite{AE85} for Sidon sets,
inspired by Pisier's problem~\cite{Pi83}.  Alon and Erd\H os asked if the following property characterises sets $X$ that are a
finite union of Sidon sets:
\begin{equation}\label{eq:PSidon}
	\textit{For some $\delta>0$ every finite $Y\subseteq X$ contains a Sidon set of size at least $\delta\,|Y|$.}
\end{equation}
It is immediate that the local property~\eqref{eq:PSidon} holds for every set $X$ being a finite union of Sidon sets.
In fact, this implication is true for every hereditary property and not specific to the property of Sidon sets. Hence, the question of
Alon and Erd\H os asks if the local property~\eqref{eq:PSidon} implies that $X$ has a `simple' global
structure, rendered by being decomposable into a finite number of Sidon sets.

There are a few instances where indeed the global structure has such a local characterisation. Most notably, Horn~\cite{Ho55} established such an assertion
for independent sets in vector spaces and, more generally, Edmonds~\cite{Ed65} proved such a statement for independent sets in matroids.
However, in the context of Sidon sets Ne\v set\v ril, R\"odl, and Sales~\cite{NRS}
(partly based on earlier work with Erd\H os) showed that such a local characterisation fails (see also~\cite{RRS}*{\ssign5}).
We strengthen their result and show that the implication fails already for $\delta=1/4$
(see properties~\ref{iti:B-set-EN} and~\ref{iti:B-set-AE} for $k=2$ and any $\l\geq 2$ in Theorem~\ref{thm:B-set} below).

\subsection{Main result}
Besides the properties discussed so far, our method yields $B_{k,\l}$-sets~$X$ enjoying additional properties (see properties~\ref{iti:Bh}\,--\,\ref{iti:distinct}
in Theorem~\ref{thm:B-set} below). Properties~\ref{iti:Bh} and~\ref{iti:Bij} imply that there is at most one additive representation for any integer
with at most $k$ terms from $X$. Consequently, assertions~\ref{iti:B-set-EN} and~\ref{iti:B-set-AE} stay valid,
even if we relax the definition of $B_{k,\l}$-sets by considering all additive representations with \emph{up to at most} $k$ terms.
Part~\ref{iti:2tol} tells us that every integer has zero, one or $\l$ additive representations from $X$ and the last property asserts that all terms appearing in two different $k$-term representations are distinct.

\begin{thm}\label{thm:B-set}
	For all integers $k\geq 2$, $\l\geq 2$, there exists an infinite $B_{k,\l}$-set $X\subseteq \NN$ satisfying the following properties:
	\begin{enumerate}[label=\rmlabel]
		\item\label{iti:B-set-EN}
		      For every integer $r\geq 2$ we have $X\lra[k,\l]_r$.
		\item\label{iti:B-set-AE}
		      Every finite subset $Y\subseteq X$ contains a $B_k$-set of size at least $\tfrac{k-1}{2k}|Y|$.
	\end{enumerate}
	In addition $X$ also satisfies:
	\begin{enumerate}[label=\rmlabel,resume]
		\item\label{iti:Bh}
		      For every $h=2,\dots,k-1$ the set $X$ is a $B_h$-set.
		\item\label{iti:Bij}
		      For every $1\leq i<j$ with $i+j<2k$ we have $\rho_{X,i}(n)+\rho_{X,j}(n)\leq 1$.
		\item\label{iti:2tol}
		      If $\rho_{X,k}(n)\geq 2$ for some $n\in\NN$, then $\rho_{X,k}(n)=\l$.
		\item\label{iti:distinct}
		      If $x_1+\dots+x_k=y_1+\dots+y_k$ for some $x_1\leq\dots\leq x_k$, $y_1\leq\dots\leq y_k$ from~$X$,
		      then either $x_i=y_i$ for all $i\in[k]$ or all $2k$ terms $x_1,\dots,x_k,y_1,\dots,y_k$ are distinct.
	\end{enumerate}
\end{thm}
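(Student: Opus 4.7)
The plan is to realise $X$ as the image, under a generic integer-valued embedding, of the vertex set of an abstract \emph{configuration hypergraph}~$\cH$ whose hyperedges encode the intended $(k,\l)$-multiplets. Such a multiplet consists of $\l$ pairwise disjoint $k$-tuples of vertices that are prescribed to share a common sum; in particular the $k\l$ vertices underlying a multiplet are required to be distinct, reflecting~\ref{iti:distinct}. One asks~$\cH$ to satisfy \textup{(a)} a Folkman-type Ramsey property---every finite colouring of $V(\cH)$ yields a monochromatic multiplet---and \textup{(b)} a sparsity/genericity condition asserting that, for each fixed~$j$, the union of any~$j$ multiplets admits the prescribed common-sum equations as its \emph{only} linear relations (up to a bounded corank). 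Condition~(b) is what will permit a faithful realisation of~$\cH$ in~$\NN$.

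The first step is the construction of $\cH$. I would proceed by iterated partite amalgamation in the style of Ne\v set\v ril--R\"odl~\cite{NR85}: one starts from a single multiplet, then amalgamates copies of previously constructed objects along a single partite class, invoking a strong hypergraph Ramsey statement at each step to force~(a), while checking that the identifications preserve the sparsity invariant underlying~(b). The passage from $k=2$ to arbitrary $k\geq 2$ means the basic objects now have $k\l$ vertices with a richer partite structure (each multiplet stratified into $\l$ blocks of size~$k$), and some care is needed to keep the amalgamation compatible with this extra stratification.

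Once~$\cH$ is in hand, the second step realises $V(\cH)$ in~$\NN$ by choosing a linear map $\phi\st\ZZ^{V(\cH)}\to\ZZ$ that vanishes on the submodule~$W$ generated by the designed sum equations and is otherwise generic. The sparsity~(b) ensures that the quotient $\ZZ^{V(\cH)}/W$ carries no nontrivial short relations, and a standard probabilistic or inductive choice of~$\phi$ yields injectivity together with the absence of any accidental identity
\[
	x_1+\dots+x_i=y_1+\dots+y_j\quad\text{with}\quad i+j\leq 2k
\]
beyond those implied by a single multiplet. Setting $X=\phi(V(\cH))$ then delivers properties~\ref{iti:Bh}--\ref{iti:distinct} at once, and pulling back any $r$-colouring of~$X$ to~$V(\cH)$ together with~(a) yields~\ref{iti:B-set-EN}.

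For~\ref{iti:B-set-AE} one inspects a finite $Y\subseteq X$: by~\ref{iti:distinct} together with the sparsity of~$\cH$, every $B_k$-violation in~$Y$ is localised within essentially disjoint multiplets, so a straightforward greedy transversal keeps at least a $\tfrac{k-1}{2k}$-fraction of~$Y$ as a $B_k$-set. The main obstacle is Step~1: simultaneously achieving a Folkman-type Ramsey strength and the strong algebraic genericity~(b) requires a carefully balanced amalgamation scheme, since the partite constructions that supply~(a) naturally produce overlaps between multiplets which, if mishandled, would create unintended linear dependencies and destroy the integer embedding in Step~2.
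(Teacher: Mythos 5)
Your overall plan---numbers as vertices of an abstract configuration hypergraph, a vertex-Folkman property for monochromatic multiplets, and a generic integer embedding respecting only the prescribed equal-sum relations---is genuinely different from the paper, which encodes numbers as \emph{differences} along edges of an infinite ordered graph $G$ on powers of $m=2k+1$ and realises the $\ell$-fold representations as copies of the theta graph $\Theta_{k,\ell}$, invoking the girth Ramsey theorem. The difficulty is that your Step~2 does not follow from your hypothesis (b). A generic functional $\phi$ vanishing on the relation module $W$ can only avoid an unintended short coincidence $x_1+\dots+x_i=y_1+\dots+y_j$ ($i+j\le 2k$) if the corresponding vector is \emph{not} in the span of $W$; and membership in that span is a \emph{global} condition, since a vector of support at most $2k$ may be a combination of the prescribed generators of arbitrarily many multiplets. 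Your condition (b) only controls unions of a bounded number $j$ of multiplets, and no bounded-girth or forest-of-copies type invariant can exclude the huge, heavily entangled configurations that the Ramsey property forces to exist (unbounded chromatic number of the multiplet system means cycles of unbounded length in its intersection structure). So either such configurations create forced short relations---destroying the $B_{k,\ell}$ property and clauses \ref{iti:Bh}--\ref{iti:distinct}---or one must prove a genuinely global sparsity statement, which your amalgamation scheme does not supply. This is precisely the problem the paper's embedding solves by arithmetic rather than genericity: since every element of $X$ is uniquely a difference of two powers of $m$, Fact~\ref{f:trivial} converts \emph{any} coincidence among at most $2k$ elements of $X$ into a local configuration of at most $2k$ edges containing a cycle, and girth $2k$ together with the unique-$\Theta_{k,\ell}$ property of Proposition~\ref{prop:21} finishes the argument. (A smaller omission of the same kind: (b) as stated does not even forbid two multiplets sharing a whole block, which would give some integer at least $2\ell-1$ representations.)

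The second gap is clause~\ref{iti:B-set-AE}. Your justification---``every $B_k$-violation in $Y$ is localised within essentially disjoint multiplets, so a greedy transversal keeps a $\tfrac{k-1}{2k}$-fraction''---is not available: by compactness the Ramsey property produces finite sets $Y\subseteq X$ on which the multiplet system has chromatic number larger than any given $r$, hence is highly entangled, and bounded-girth sparsity says nothing about how many overlapping multiplets live inside such a $Y$. Nothing in the abstract construction produces the specific constant $\tfrac{k-1}{2k}$. In the paper this clause is proved by exploiting the edge representation: $k$-term representations correspond to ascending paths of length $k$ in $G$, and for any finite $E'\subseteq E(G)$ an averaging partition of $\NN$ into $k$ classes, followed by splitting the crossing edges according to whether the class index increases or decreases along the numerical order, yields $E''\subseteq E'$ with $|E''|\ge\tfrac{k-1}{2k}|E'|$ and no ascending path of length $k$, whence the corresponding subset of $Y$ is a $B_k$-set. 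Your vertex-based framework has no counterpart of this ordering/orientation argument, so this step would need a new idea (and the paper remarks, citing \cite{RRSS}, that the constant is tied to an ordered Tur\'an-type problem, so it cannot be obtained by naive greedy deletion).
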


Following the idea from the work of Ne\v set\v ril and R\"odl~\cite{NR85},
for $\l=2$ the proof of Theorem~\ref{thm:B-set} relies on the existence of Ramsey graphs for the cycle $C_{2k}$
having girth $2k$ themselves. Such a result is a consequence of the work of R\"odl and Ruci\'nski~\cite{RR95} on thresholds for Ramsey properties in random graphs.
For $\l>2$ the cycle~$C_{2k}$ is replaced by so-called generalised theta graphs $\Theta_{k,\l}$ and the existence of a Ramsey graph of the same girth
follows from recent work of the first two authors~\cite{RR}. For the proof of Theorem~\ref{thm:B-set} we shall employ a version of these results
for ordered graphs. In the next section we introduce these results from Ramsey theory.

We close this introduction by remarking that we leave it open if the constant factor $\frac{k-1}{2k}$ in 
clause~\ref{iti:B-set-AE} of Theorem~\ref{thm:B-set} is best possible. The proof of the theorem in \ssign\ref{sec:proof} 
relies on finding large $\Theta_{k,\l}$-free ordered subgraphs and the recent work in~\cite{RRSS} shows that for this general 
Tur\'an-type problem the constant is optimal.
%
%
%However, it follows from  that 
%the approach undertaken in the proof of the theorem in \ssign\ref{sec:proof} cannot be improved and new ideas would be needed 
%for a larger constant factor.

\section{Local Ramsey theory}
All graphs in this article have ordered vertex sets and all graph isomorphisms respect
the orderings of the vertex sets. So for any two graphs there is at most one isomorphism
between them. Given two integers $k, \ell\ge 2$ we fix a {\it generalised theta
		graph} $\Theta_{k, \ell}$ consisting of $\ell$ internally vertex disjoint, ascending
paths of length $k$ with common end points. Thus the graph~$\Theta_{k, \ell}$ has
\begin{enumerate}
	\item[$\bullet$] $(k-1)\ell+2$ vertices, say $a_0$, $a_k$,
	      and $a_i^{(j)}$ with $(i, j)\in [k-1]\times [\ell]$,
	\item[$\bullet$] and $k\ell$ edges so that $a_0a^{(j)}_1\dots a^{(j)}_{k-1}a_k$
	      is a path for every $j\in [\ell]$ (see Figure~\ref{fig:theta}).
\end{enumerate}
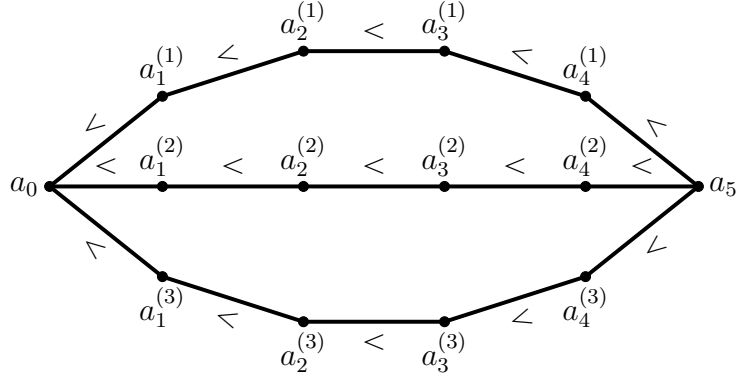
\begin{figure}[ht]
	\centering
	\begin{tikzpicture}[scale=0.75]

		\coordinate (x0) at (0.5,0);

		\coordinate (x1) at (2.5,1.6);
		\coordinate (x2) at (5,2.4);
		\coordinate (x3) at (7.5,2.4);
		\coordinate (x4) at (10,1.6);

		\coordinate (x5) at (2.5,0);
		\coordinate (x6) at (5,0);
		\coordinate (x7) at (7.5,0);
		\coordinate (x8) at (10,0);

		\coordinate (x9) at (2.5,-1.6);
		\coordinate (x10) at (5,-2.4);
		\coordinate (x11) at (7.5,-2.4);
		\coordinate (x12) at (10,-1.6);

		\coordinate (x13) at (12,0);

		\draw[line width=1.5pt]
		(x0) -- node[above,sloped] {$<$}
		(x1) -- node[above,sloped] {$<$}
		(x2) -- node[above,sloped] {$<$}
		(x3) -- node[above,sloped] {$<$}
		(x4) -- node[above,sloped] {$<$}
		(x13);
		\draw[line width=1.5pt]
		(x0) -- node[above,sloped] {$<$}
		(x5) -- node[above,sloped] {$<$}
		(x6) -- node[above,sloped] {$<$}
		(x7) -- node[above,sloped] {$<$}
		(x8) -- node[above,sloped] {$<$}
		(x13);
		\draw[line width=1.5pt]
		(x0) -- node[below,sloped] {$<$}
		(x9) -- node[below,sloped] {$<$}
		(x10) -- node[below,sloped] {$<$}
		(x11) -- node[below,sloped] {$<$}
		(x12) -- node[below,sloped] {$<$}
		(x13);

		\foreach \i in {0,...,13}
		\filldraw (x\i) circle (2.5pt);

		\node[anchor=east] at (x0) {$a_0$};
		\node[anchor=west] at (x13) {$a_5$};

		\node[anchor=south] at (x1) {$a_1^{(1)}$};
		\node[anchor=south] at (x2) {$a_2^{(1)}$};
		\node[anchor=south] at (x3) {$a_3^{(1)}$};
		\node[anchor=south] at (x4) {$a_4^{(1)}$};

		\node[anchor=south] at (x5) {$a_1^{(2)}$};
		\node[anchor=south] at (x6) {$a_2^{(2)}$};
		\node[anchor=south] at (x7) {$a_3^{(2)}$};
		\node[anchor=south] at (x8) {$a_4^{(2)}$};

		\node[anchor=north] at (x9) {$a_1^{(3)}$};
		\node[anchor=north] at (x10) {$a_2^{(3)}$};
		\node[anchor=north] at (x11) {$a_3^{(3)}$};
		\node[anchor=north] at (x12) {$a_4^{(3)}$};
	\end{tikzpicture}
	\caption{Ordered theta graph $\Theta_{5,3}$.}
	\label{fig:theta}
\end{figure}
The demand that these path be ascending means that we require
$a_0<a^{(j)}_1<\dots<a^{(j)}_{k-1}<a_k$
for every $j\in [\ell]$. As it is immaterial to our argument how internal
vertices of distinct paths compare to each other with respect to the vertex ordering
of $\Theta_{k, \ell}$, we would like to leave this unspecified.

Roughly speaking, the reason why theta graphs help us in our investigation of
$B_{k, \ell}$-sets is that when $V(\Theta_{k, \ell})\subseteq\NN$
and
\[
	x^{(j)}_i=a^{(j)}_i-a^{(j)}_{i-1}\,,
\]
where $a^{(j)}_0=a_0$ and $a^{(j)}_k=a_k$,
then the~$\ell$ sums $\sum_{i=1}^k x^{(j)}_i$ telescope to $a_k-a_0$ and, hence,
they are equal. The set $X$ promised by Theorem~\ref{thm:B-set} will be derived
from an appropriate (infinite) graph $G$ with $V(G)\subseteq \NN$ by setting
\[
	X=\{b-a\colon ab\in E(G)\text{ and } a<b\}\,.
\]
Thus all occurrences
of~$\Theta_{k, \ell}$ in $G$ correspond to numbers expressible in $\ell$ distinct ways
as~$k$-term sums with elements from $X$. Moreover, the partition relation $X\lra[k, \ell]_r$ can be
enforced by letting~$G$ contain a Ramsey graph of $\Theta_{k, \ell}$ for $r$ colours.

In view of the desired properties of $X$, we will also need $G$ to have some further qualities, such as containing
neither~$\Theta_{k, \ell+1}$ nor cycles of length less than $2k$, but provided that the
natural numbers in $V(G)$ are sufficiently `far apart' it turns out that
all demands on~$X$ translate to such `local' properties of $G$.
The local structure of Ramsey graphs has recently been analysed by the first two authors~\cite{RR},
who established the girth Ramsey theorem.
Here is the precise statement for theta graphs,
which we shall exploit.

\begin{prop}\label{prop:21}
	For all integers $k, \ell\ge 2$ and $s\ge 2k$ there is an infinite
	graph~$G$ with vertex set $V(G)\subseteq\NN$ possessing the following properties:
	\begin{enumerate}[label=\rmlabel]
		\item\label{it:pr1} For every colouring of $E(G)$ with finitely many
		      colours there is a monochromatic induced copy of $\Theta_{k, \ell}$.
		\item\label{it:pr2} Every induced cycle in $G$ of length at most $s$
		      has length exactly $2k$ and belongs to a unique copy of~$\Theta_{k, \ell}$
		      in $G$.
	\end{enumerate}
\end{prop}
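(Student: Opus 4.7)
The approach is to derive Proposition~\ref{prop:21} from the finite girth Ramsey theorem of the first two authors~\cite{RR}. For fixed $k, \ell \ge 2$, $s \ge 2k$, and every integer $r \ge 2$, that theorem supplies a finite ordered graph $H_r$ with $V(H_r) \subseteq \NN$ such that \emph{(a)} every colouring of $E(H_r)$ with $r$ colours contains a monochromatic induced copy of $\Theta_{k,\ell}$, and \emph{(b)} every induced cycle of $H_r$ of length at most $s$ has length exactly $2k$ and belongs to a unique copy of $\Theta_{k,\ell}$ in $H_r$. Using disjoint intervals of $\NN$, I place pairwise disjoint ordered copies of $H_2, H_3, \dots$ into $\NN$, and take $G=\bigsqcup_{r\ge 2} H_r$, which is an infinite ordered graph with vertex set in $\NN$.

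Property~\ref{it:pr1} is then immediate: any colouring of $E(G)$ with $r$ colours restricts to an $r$-colouring of $E(H_r)$, which yields a monochromatic induced copy of $\Theta_{k,\ell}$ in $H_r$; since $G$ has no edges between distinct $H_{r'}$'s, the copy is induced in $G$ as well. For property~\ref{it:pr2}, I observe that any induced cycle $C$ in $G$ is connected, so $V(C)\subseteq V(H_r)$ for some unique $r$, and $G[V(C)]=H_r[V(C)]$; hence $C$ is also an induced cycle in $H_r$. If $|C|\le s$, property~\emph{(b)} forces $|C|=2k$ and provides a unique $\Theta_{k,\ell}$-copy in $H_r$ containing $C$. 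Because $\Theta_{k,\ell}$ is connected, every $\Theta_{k,\ell}$-copy in $G$ meeting $V(C)$ is confined to $H_r$, so uniqueness lifts from $H_r$ to $G$.

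The real work is therefore not the infinite assembly, which is essentially routine, but rather extracting from~\cite{RR} the precise finite statement used above. Specifically, one needs the girth Ramsey theorem specialised to the target graph $\Theta_{k,\ell}$ in its ordered induced-Ramsey form, refined so that each short induced cycle of the Ramsey host not only has length $2k$ but in addition lies in a unique copy of the target. I expect this to be the main obstacle, and would plan to isolate the appropriate statement (or the appropriate intermediate construction) from~\cite{RR} and translate it into the ordered setting adopted here.
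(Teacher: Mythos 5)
Your reduction of the infinite statement to finite graphs $H_r$ and the disjoint-union assembly is fine and matches the paper's opening observation, but the proposal has a genuine gap at exactly the point you flag and then defer: the finite input you attribute to~\cite{RR} is not what that work supplies. The girth Ramsey theorem, in the form actually available (Theorem~\ref{thm:grt}), gives a host $H$ together with a Ramsey system of copies $\ccH\subseteq\binom{H}{\Theta_{k,\ell}}$ such that every small subsystem $\ccN\subseteq\ccH$ can be completed to a forest of copies; it does \emph{not} state that every short induced cycle of $H$ has length $2k$, nor that such a cycle lies in a unique copy of $\Theta_{k,\ell}$. Deriving these two properties from the forest-of-copies structure is the actual content of the paper's proof of Proposition~\ref{prop:21}: one covers the edges of a short induced cycle $C$ by copies from $\ccH$, completes to a forest $\{\Theta_1^\star,\dots,\Theta_m^\star\}$, and runs a minimality argument on the least $j$ with $E(C)\subseteq\bigcup_{i\le j}E(\Theta_i^\star)$, splitting according to whether the attachment set $z_j$ is at most a vertex or an edge (the latter case uses that $C$ is induced). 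This shows $C$ sits inside a single copy of $\Theta_{k,\ell}$, hence has length $2k$.

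The uniqueness in~\ref{it:pr2} needs a further step you do not mention at all: one must show the host is $\Theta_{k,\ell+1}$-free, again via the forest-of-copies property, using that $\Theta_{k,\ell+1}$ is $2$-connected and remains connected after deleting two adjacent vertices (so the attachment edge $z_j$ cannot lie in the putative copy, forcing a short cycle and a contradiction with the girth already established). Only then does the argument that a $2k$-cycle determines its $\Theta_{k,\ell}$-copy uniquely go through, since a second ascending $a_0$--$a_k$ path of length $k$ would create a $\Theta_{k,\ell+1}$. In short, your plan to ``isolate the appropriate statement from~\cite{RR}'' presupposes a refined finite theorem (short induced cycles of length exactly $2k$ lying in unique copies, in the ordered induced setting) that has to be proved, not quoted; supplying that deduction is the substance of the proposition, and without it the proposal is an outline rather than a proof.
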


For transparency we would like to point out that~\ref{it:pr2} tells us, in particular, that $G$ contains no cycles of length shorter than $2k$ and
how all $2k$-cycles in $G$ are ordered. All of them have two diametrically opposite
vertices joined by two ascending paths of length $k$.
In the remainder of this section we explain how Proposition~\ref{prop:21} follows
from known results~\cite{RR}, while the next section is reserved to the deduction
of Theorem~\ref{thm:B-set} from Proposition~\ref{prop:21}.

We shall utilise a
strong version of the girth Ramsey theorem, whose formulation requires some
preparations.

\begin{dfn}[Forests of copies] \label{dfn:forcop}
	A set of graphs~$\ccN$ is called a {\it forest of copies} if there
	exists an enumeration $\ccN=\{F_1, \dots, F_{|\ccN|}\}$ such that
	for every $j\in [2, |\ccN|]$ the set
	\[
		V(F_j)\cap\bigcup_{i<j}V(F_i)
	\]
	is either empty, a single vertex, or an edge belonging both to~$E(F_j)$
	and to $\bigcup_{i<j}E(F_i)$.
\end{dfn}

For instance, if all graphs in $\ccN$ are single edges, then $\ccN$ is a forest
of copies if and only if these edges form a forest in the ordinary graph theoretical sense.
However, this simple example might create the false impression that forests of copies would be
closed under taking subsets. A counterexample is shown in Figure~\ref{fig:12}.
On the left hand side we see a `cycle of triangles', or more precisely a collection of
five triangles not forming a forest of copies. By adding two further edges, however,
we can hide this configuration inside a forest of eight triangles (see Figure~\ref{fig:12b}).

\begin{figure}[ht]
	\centering
	\begin{subfigure}[b]{0.4\textwidth}
		\centering
		\begin{tikzpicture}

			\coordinate (x0) at (142:1.4cm);
			\coordinate (x1) at (214:1.4cm);
			\coordinate (x2) at (286:1.4cm);
			\coordinate (x3) at (358:1.4cm);
			\coordinate (x4) at (70:1.4cm);
			\coordinate (x5) at (106:2.5cm);
			\coordinate (x6) at (178:2.5cm);
			\coordinate (x7) at (250:2.5cm);
			\coordinate (x8) at (322:2.5cm);
			\coordinate (x9) at (34:2.5cm);

			\hedge{(x0)}{(x1)}{(x6)}{4pt}{1.2pt}{red!80!black}{red!60!white,opacity=0.25}
			\hedge{(x1)}{(x2)}{(x7)}{4pt}{1.2pt}{red!80!black}{red!60!white,opacity=0.25}
			\hedge{(x2)}{(x3)}{(x8)}{4pt}{1.2pt}{red!80!black}{red!60!white,opacity=0.25}
			\hedge{(x3)}{(x4)}{(x9)}{4pt}{1.2pt}{red!80!black}{red!60!white,opacity=0.25}
			\hedge{(x4)}{(x0)}{(x5)}{4pt}{1.2pt}{red!80!black}{red!60!white,opacity=0.25}

			\draw[line width=1pt] (x0)--(x1)--(x2)--(x3)--(x4)--cycle;
			\draw[line width=1pt] (x4)--(x5)--(x0)--(x6)--(x1)--(x7)--(x2)--(x8)--(x3)--(x9)--cycle;

			\foreach \i in {0,...,9}
			\filldraw (x\i) circle (1.7pt);

		\end{tikzpicture}

		\caption{A `cycle' of triangles \dots}
		\label{fig:12a}

	\end{subfigure}
	\hfill
	\begin{subfigure}[b]{0.4\textwidth}
		\centering

		\begin{tikzpicture}

			\coordinate (x0) at (142:1.4cm);
			\coordinate (x1) at (214:1.4cm);
			\coordinate (x2) at (286:1.4cm);
			\coordinate (x3) at (358:1.4cm);
			\coordinate (x4) at (70:1.4cm);
			\coordinate (x5) at (106:2.5cm);
			\coordinate (x6) at (178:2.5cm);
			\coordinate (x7) at (250:2.5cm);
			\coordinate (x8) at (322:2.5cm);
			\coordinate (x9) at (34:2.5cm);

			\hedge{(x0)}{(x1)}{(x6)}{4pt}{1.2pt}{red!80!black}{red!60!white,opacity=0.25}
			\hedge{(x1)}{(x2)}{(x7)}{4pt}{1.2pt}{red!80!black}{red!60!white,opacity=0.25}
			\hedge{(x2)}{(x3)}{(x8)}{4pt}{1.2pt}{red!80!black}{red!60!white,opacity=0.25}
			\hedge{(x3)}{(x4)}{(x9)}{4pt}{1.2pt}{red!80!black}{red!60!white,opacity=0.25}
			\hedge{(x4)}{(x0)}{(x5)}{4pt}{1.2pt}{red!80!black}{red!60!white,opacity=0.25}

			\hedge{(x0)}{(x3)}{(x2)}{4pt}{1.2pt}{red!60!yellow}{red!60!yellow,opacity=0.25}
			\hedge{(x0)}{(x2)}{(x1)}{4pt}{1.2pt}{red!60!yellow}{red!60!yellow,opacity=0.25}
			\hedge{(x0)}{(x4)}{(x3)}{4pt}{1.2pt}{red!60!yellow}{red!60!yellow,opacity=0.25}

			\draw[line width=1pt] (x0)--(x1)--(x2)--(x3)--(x4)--cycle;
			\draw[line width=1pt] (x4)--(x5)--(x0)--(x6)--(x1)--(x7)--(x2)--(x8)--(x3)--(x9)--cycle;
			\draw[line width=1pt] (x3)--(x0)--(x2);

			\foreach \i in {0,...,9}
			\filldraw (x\i) circle (1.7pt);

		\end{tikzpicture}
		\caption{\dots contained in a forest.}
		\label{fig:12b}
	\end{subfigure}
	\caption{Some subforests fail to be forests.}
	\label{fig:12}
\end{figure}
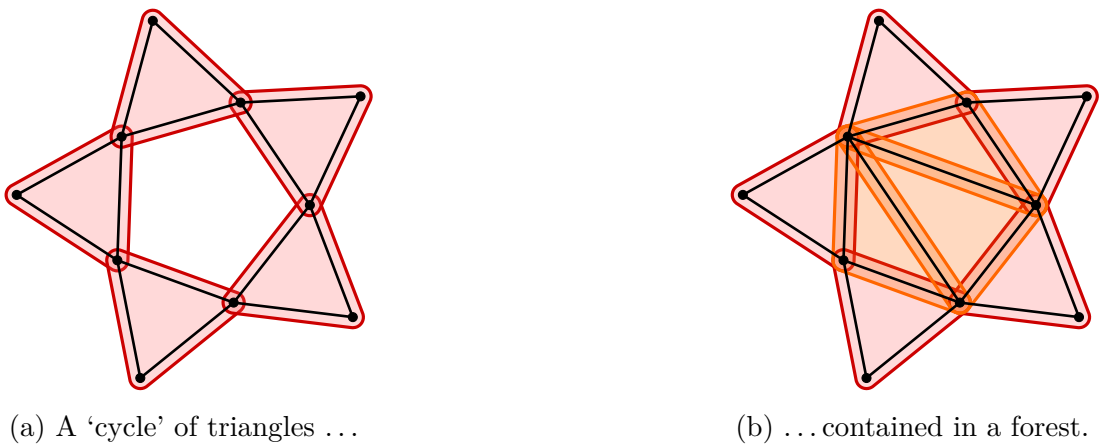

Given two ordered graphs $F$ and~$H$ we write $\binom HF$ for the set of all ordered copies
of~$F$ in~$H$, i.e., for the set of all induced, ordered subgraphs of $H$ isomorphic to $F$.
For a subsystem $\ccH\subseteq \binom HF$ and a number of colours~$r$ the
partition relation $\ccH\lra (F)_r$ indicates that for every colouring
$f\colon E(H)\lra [r]$ one
of the copies in $\ccH$ is monochromatic. The girth Ramsey theorem confirms the
intuition that some such Ramsey systems are locally just forests of copies.
The r\^{o}le of $\ccX$ in the following theorem is due to the fact that, for the above
reason,~$\ccN$ can only be demanded to be a subset of a forest copies rather than an actual forest of copies.

\begin{thm}[Girth Ramsey theorem] \label{thm:grt}
	Given a graph $F$ and $r, n\in \NN$ there exists a graph~$H$
	together with a system of copies $\ccH\subseteq\binom{H}{F}$
	satisfying not only $\ccH\lra (F)_r$, but also the following statement:
	For every $\ccN\subseteq \ccH$ with $|\ccN|\in [2, n]$ there exists
	a set $\ccX\subseteq \ccH$ such that $|\ccX|\le |\ccN|-2$ and $\ccN\cup\ccX$
	is a forest of copies. \qed
\end{thm}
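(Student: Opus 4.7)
The plan is to proceed by a partite-construction (Ne\v set\v ril--R\"odl amalgamation) argument. Since all our graphs are ordered, I would first fix the ordered vertex set $V(F)=\{v_1<\dots<v_f\}$ and reduce the problem to the $f$-partite setting: produce an $f$-partite ordered graph $H^*$ with parts $V_1<\dots<V_f$, together with a system $\ccH^*\subseteq \binom{H^*}{F}$ of transversal ordered copies of $F$, satisfying both $\ccH^*\lra(F)_r$ and the local-forest statement. A standard Erd\H{o}s--Hajnal style amalgamation of finitely many permuted translates of $H^*$ then promotes this to the non-partite conclusion; since distinct translates can be arranged to meet in at most a single vertex, the local-forest property is preserved in this last step.

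The construction of $H^*$ itself proceeds by induction, the inductive object being a hypergraph $\ccB$ whose vertices correspond to edges of $F$ and whose hyperedges are earmarked to be `blown up' into copies of $F$. I would begin from a base hypergraph with sufficiently strong partition properties (obtainable either from a random-hypergraph threshold argument or from an iterated partite amalgamation in the spirit of Ne\v set\v ril--R\"odl) and then form $H^*$ by taking one copy of $F$ for each hyperedge of $\ccB$ and identifying these copies along shared vertices exactly as the incidence structure of $\ccB$ prescribes. The Ramsey property $\ccH^*\lra(F)_r$ is inherited directly from the hypergraph Ramsey property of $\ccB$.

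The hard part, and the whole point of the theorem, is preserving the local-forest condition through this amalgamation. Naive amalgamations readily create `cycles of copies' as in Figure~\ref{fig:12a}, and such cyclic intersection patterns obstruct the forest property. The fix is to demand that $\ccB$ itself satisfies the analogous local-forest condition at the hypergraph level: every sub-hypergraph of $\ccB$ of cardinality in $[2,n]$ can be completed by at most $|\ccN|-2$ additional hyperedges to a genuine forest of hyperedges. This hypergraph version is the actual induction hypothesis, and the delicate amalgamation lemma that transfers it between successive pairs $(r,n)$, simultaneously preserving the partition relation and the local acyclicity, forms the technical core of the argument and the point where most of the work in~\cite{RR} is invested. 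Partite working throughout keeps the vertex orderings consistent, so the resulting graph $H$ is automatically ordered as required.
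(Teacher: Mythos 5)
There is a genuine gap, and it is exactly the one you name yourself. Note first that the paper does not prove Theorem~\ref{thm:grt} at all: the statement carries a \qed{} because it is imported from \cite{RR}*{Theorem~13.12}, with the remark that the ordered version follows by running the same partite-construction proof with Ramsey's theorem for ordered graphs as the input construction $\Phi$. Your sketch is aligned in spirit with that source (partite amalgamation over a base Ramsey object, an induction in which a forest-type condition is carried along), but as a proof it establishes nothing beyond a plan: the one step in which the entire difficulty of the girth Ramsey theorem resides --- the amalgamation lemma showing that the local forest condition, with the precise bound $|\ccX|\le|\ccN|-2$, survives each amalgamation step --- is not argued, and you explicitly defer it to \cite{RR} (``the point where most of the work \dots is invested''). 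Saying that this lemma is the technical core is a description of the problem, not a solution; nothing in the proposal indicates how cyclic intersection patterns among copies (as in Figure~\ref{fig:12a}) are prevented or absorbed through the induction, nor where the quantity $|\ccN|-2$ comes from. Since the theorem being quoted is precisely the assertion that this preservation can be carried out, the proposal does not prove the statement.

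Two further points. The closing reduction via ``finitely many permuted translates of $H^*$'' is both unnecessary and unjustified as stated: in the ordered setting an $f$-partite graph with parts $V_1<\dots<V_f$ is already an ordered graph, and its transversal induced copies already lie in $\binom{H}{F}$, so no non-partite promotion step is needed; and if one did glue translates along single vertices, one would still have to check that no new members of $\ccH$ and no new short cyclic intersection patterns are created, which is again the same difficulty you have deferred. Also, the description of the base hypergraph (``vertices correspond to edges of $F$'') is off; in the partite construction the base object is a Ramsey (hyper)graph whose hyperedges index the copies to be amalgamated, and for the ordered statement the correct input at this point is Ramsey's theorem for ordered graphs --- which is, in fact, the only adaptation the paper itself spells out when it cites \cite{RR} for this theorem.
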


For unordered graphs Theorem~\ref{thm:grt} follows from the work of the first two
authors~\cite{RR}*{Theorems~13.12}.  For ordered graphs Theorem~\ref{thm:grt} can be obtained by the same proof of Theorem~13.12, where for the
Ramsey construction $\Phi$ we appeal to Ramsey's theorem for
ordered graphs (see, e.g.,~\cite{RR}*{\ssign3.5} for details on the partite construction method for ordered graphs).

\begin{proof}[Proof of Proposition~\ref{prop:21}]
	Given integers $k$, $\l$, and $s$ we first observe that the desired
	infinite graph $G$ can be taken as the disjoint union of finite graphs $(H_r)_{r\geq 2}$ satisfying:
	\begin{enumerate}[label=\alabel]
		\item\label{iti:21a} $H_r\lra (\Theta_{k, \ell})_r$,
		\item\label{iti:21b} every induced cycle in $H_r$ of
		      length at most $s$ has length $2k$,
		\item\label{iti:21c} and every such cycle is contained in a unique copy
		      of~$\Theta_{k, \ell}$ in $H_r$.
	\end{enumerate}

	Now given $r$ we appeal to Theorem~\ref{thm:grt} with $F=\Theta_{k, \ell}$
	and $n=\max\{s, k(\ell+1)\}$, thereby obtaining a graph $H$ and a system of
	copies~$\ccH\subseteq \binom H{\Theta_{k, \ell}}$.
	We may assume that $E(H)$ is the union of the edge sets of the
	copies of $\Theta_{k, \ell}$ in~$\ccH$, because throwing away further edges of~$H$ would not destroy
	either of the relevant properties of~$H$ and~$\ccH$. We shall prove that the graph $H_r=H$ displays
	properties~\ref{iti:21a}\,--\,\ref{iti:21c}.

	Part~\ref{iti:21a} immediately follows from  $\ccH\lra (\Theta_{k, \ell})_r$.
	For the proof of assertion~\ref{iti:21b} we consider an induced  cycle~$C$ in~$H$ whose length is at most $s$ and we show that there
	is a copy of~$\Theta_{k, \ell}$ in $\ccH$ containing it. To see this we take a set $\ccN\subseteq \ccH$ of
	size $|\ccN|\le s\le n$ such
	that $\bigcup_{\Theta^{\star}\in\ccN} E(\Theta^{\star})$ contains all edges of~$C$.
	By Theorem~\ref{thm:grt} there exists
	a set $\ccX\subseteq \ccH$ such that $\ccN\cup\ccX$ is a forest of copies.
	Let the enumeration $\ccN\cup\ccX=\{\Theta_1^{\star}, \dots, \Theta_m^{\star}\}$ exemplify this state
	of affairs and let $j\le m$ be minimal such
	that $E(C)\subseteq \bigcup_{i\le j} E(\Theta^{\star}_i)$.

	If $j=1$ then~$C$ is contained
	in $\Theta_1^{\star}$ and we are done. So suppose $j\in [2, m]$ from now on.
	Recall that the set
	\[
		z_j=V(\Theta_j^{\star})\cap\bigcup_{i<j}V(\Theta_i^{\star})
	\]
	satisfies
	either $|z_j|\le 1$
	or $z_j\in E(\Theta_j^{\star})\cap\bigcup_{i<j}E(\Theta_i^{\star})$.

	In the former case the graph $\bigcup_{i\le j}\Theta_i^{\star}$ is the disjoint union or
	a one-point amalgamation of $\bigcup_{i<j}\Theta_i^{\star}$ and $\Theta_j^{\star}$. As these graph
	operations never create new cycles, the minimality of $j$ shows that $C$
	is entirely contained in $\Theta_j^{\star}$ and we are done again.

	If $z_j$ is an edge, it is still true that $C$ is contained in either $\bigcup_{i< j}\Theta_i^{\star}$
	or in $\Theta_j^{\star}$, since otherwise the edge $z_j$ contradicts the
	fact that $C$ is induced in $H$. This concludes the proof that $C$ is contained in a copy $\Theta^{\star}$
	of $\Theta_{k, \ell}$ from $\ccH$ and establishes part~\ref{iti:21b}.

	Before addressing the uniqueness in part~\ref{iti:21c}, we would like to argue that $H$
	is $\Theta_{k, \ell+1}$-free.
	Due to $e(\Theta_{k, \ell+1})=k(\ell+1)\le n$ we could otherwise
	find a forest of copies ${\{\Theta_1^{\star}, \dots, \Theta_m^{\star}\}\subseteq \ccH}$ whose union
	contains a copy~$\Theta^{\star\star}$ of~$\Theta_{k, \ell+1}$.
	Again we consider the least integer $j$
	with $E(\Theta^{\star\star})\subseteq \bigcup_{i\le j}E(\Theta_i^{\star})$,
	observe $j\in [2, m]$, and look at the set $z_j=V(\Theta_j^{\star})\cap\bigcup_{i<j}V(\Theta_i^{\star})$.
	Since $\Theta_{k, \ell+1}$ is $2$-connected, the size of $z_j$ needs to be at
	least two, wherefore $z_j$ is an edge which $\bigcup_{i<j}\Theta_i^{\star}$ and~$\Theta_j^{\star}$
	have in common.

	Since the deletion of any two adjacent vertices from $\Theta_{k, \ell+1}$
	yields a connected graph,~$z_j$ cannot belong
	to $E(\Theta^{\star\star})$, since this would force the copy $\Theta^{\star\star}$ of $\Theta_{k, \ell+1}$ to be contained in the copy
	$\Theta_j^{\star}$ of $\Theta_{k, \ell}$, which is impossible. Consequently, the edge $z_j$ witnesses
	that $\Theta^{\star\star}$ is a non-induced subgraph of~$H$.
	But this causes $H$ to contain a cycle whose length is at most~$k+1$, contrary
	to the already established part~\ref{iti:21b}.

	Let us finally address part~\ref{iti:21c}. Given an induced cycle $C$
	in $H$ of length at most $s$ we already know that $C$ has length $2k$ and
	is contained in some copy~$\Theta^\star$ of $\Theta_{k, \ell}$ in $H$.
	Thanks to our vertex orderings, the vertices $a_0$ and $a_k$ of this copy need
	to be determined by $a_0=\min V(C)$ and $a_k=\max V(C)$.
	Now $\Theta^\star$ consists of $\ell$ internally vertex disjoint ascending
	path from $a_0$ to $a_k$, two of which form $C$. It suffices to show that there is
	no further ascending path from $a_0$ to $a_k$ of length $k$. If such a path~$P$
	existed, then none of its inner vertices could be in $V(\Theta^\star)$,
	because $H$ has girth $2k$. But this means that~$P$ and~$\Theta^\star$
	form a copy of $\Theta_{k, \ell+1}$ in $H$, which is absurd.
\end{proof}

\section{Proof of the main theorem}
\label{sec:proof}
When proving Theorem~\ref{thm:B-set} we sometimes need to know that if two
short sums of elements of our set $X$ agree, then under certain conditions
their summands agree. In all
cases this will ultimately boil down to the following basic arithmetic principle.

\begin{fact}\label{f:trivial}
	If $a_0+a_1m+\dots+a_q m^q=0$ holds for some integers $m\ge 2$, $q\ge 1$,
	and $a_0, \dots, a_q\in (-m, m)$, then $a_0=\dots=a_q=0$.
\end{fact}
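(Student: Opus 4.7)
The plan is a straightforward induction on $q$, whose engine is the simple observation that $0$ is the only integer in the open interval $(-m,m)$ that is divisible by $m$. This lets us strip off the lowest-order coefficient at each stage.

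For the base case $q=1$ the equation reads $a_0+a_1m=0$, so $a_0\equiv 0\pmod m$. Combined with $a_0\in(-m,m)$, this forces $a_0=0$, and then $a_1m=0$ gives $a_1=0$ as well.

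For the inductive step with $q\ge 2$, reduce the hypothesised identity modulo $m$. All terms $a_im^i$ with $i\ge 1$ vanish, so $a_0\equiv 0\pmod m$, and the range condition again yields $a_0=0$. Dividing the remaining relation $a_1m+\dots+a_qm^q=0$ by $m$ produces
\[
a_1+a_2m+\dots+a_qm^{q-1}=0\,,
\]
which has the same shape with $q-1$ in place of $q$ and with the digits $a_1,\dots,a_q$ still lying in $(-m,m)$. The inductive hypothesis then gives $a_1=\dots=a_q=0$, completing the proof.

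There is essentially no obstacle here: the statement is the signed-digit analogue of the uniqueness of base-$m$ expansions. The only point worth emphasising is that the symmetric range $(-m,m)$, rather than $[0,m)$, is what permits the negative coefficients that will arise in later applications, while still leaving exactly one representative of each residue class modulo $m$ in the allowed range.
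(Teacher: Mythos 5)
Your proof is correct and rests on the same core observation as the paper's: the only multiple of $m$ in $(-m,m)$ is $0$, applied to the lowest-order coefficient. The paper phrases this as a contradiction with the least index $i$ such that $a_i\ne 0$ rather than as an explicit induction on $q$, but this is only a difference in bookkeeping, not in substance.
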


\begin{proof}
	Otherwise there exists a least index $i$ such that $a_i\ne 0$. Since
	$a_im^i+\dots+a_qm^q=0$ is divisible by $m^{i+1}$, the coefficient $a_i$
	needs to be divisible by $m$. Together with $|a_i|<m$ this yields the
	contradiction $a_i=0$.
\end{proof}

\begin{proof}[Proof of Theorem~\ref{thm:B-set}]
	For two given numbers $k$, $\l\geq 2$
	we apply Proposition~\ref{prop:21} to $s=2k$, thereby obtaining an
	infinite
	graph $G$ with $V(G)\subseteq\NN$ such that
	\begin{enumerate}[label=\nlabel]
		\item\label{it:n1} $G\lra (\Theta_{k, \ell})_r$ holds for every positive
		      integers $r$,
		\item\label{it:n2} all cycles in $G$ have length at least $2k$,
		\item\label{it:n3} and every $2k$-cycle in $G$ is contained in a unique copy
		      of $\Theta_{k, \ell}$.
	\end{enumerate}
	By relabelling the vertices of $G$ we may assume that they are powers
	of $m=2k+1$, i.e.,
	\[
		V(G)=\{m^n\colon n\in\NN\}.
	\]
	We shall prove that the set
	\[
		X=\{b-a\colon ab\in E(G) \text{ and } a<b\}
	\]
	has all desired properties~\ref{iti:B-set-EN}\,--\,\ref{iti:distinct} of Theorem~\ref{thm:B-set}. Most claims on short sums of elements of $X$ will
	follow from the following statement.

	\begin{clm}\label{clm:31}
		If for two positive integers $s$, $t$ with $s+t\le 2k$ there are elements
		$x_1\le\dots\le x_s$ and $y_1\le \dots\le y_t$ of $X$ with
		$x_1+\dots+x_s=y_1+\dots+y_t$, then one of the following is true.
		\begin{enumerate}[label=\alabel]
			\item\label{it:clma} Either $s=t$ and $(x_1, \dots, x_s)=(y_1, \dots, y_t)$
			\item\label{it:clmb} or $s=t=k$ and there is a unique copy
			      of $\Theta_{k, \ell}$ in $G$, say with vertices $a_0$, $a_k$, and~$a^{(j)}_i$,
			      and there are unique distinct indices $j, j'\in [\ell]$
			      such that $x_i=a^{(j)}_i-a^{(j)}_{i-1}$ and $y_i=a^{(j')}_i-a^{(j')}_{i-1}$
			      holds for all $i\in [k]$ (where, as usual, $a^{(j)}_0=a^{(j')}_0=a_0$
			      and $a^{(j)}_k=a^{(j')}_k=a_k$) and the $2k$ terms $x_1,\dots,x_k,y_1\dots,y_k$ are all distinct.
		\end{enumerate}
	\end{clm}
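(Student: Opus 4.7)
My plan is to translate the equality $\sum x_i = \sum y_j$ into a statement about a directed multigraph on $V(G)$ and then to exploit properties~\ref{it:n2} and~\ref{it:n3} of~$G$ to identify the only possible configurations. First, I would write each $x_i = m^{\beta_i} - m^{\alpha_i}$ with $\alpha_i < \beta_i$ and $m^{\alpha_i}m^{\beta_i}\in E(G)$, and similarly $y_j = m^{\delta_j} - m^{\gamma_j}$. Moving everything to one side gives
\[
\sum_i m^{\beta_i} + \sum_j m^{\gamma_j} - \sum_i m^{\alpha_i} - \sum_j m^{\delta_j} = 0\,.
\]
Every coefficient of $m^p$ on the left is a signed count of endpoints with exponent~$p$ and therefore lies in $[-(s+t), s+t]\subseteq(-m, m)$, since $m = 2k+1$ and $s+t\le 2k$. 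Fact~\ref{f:trivial} forces each coefficient to vanish, which is precisely the assertion that the directed multigraph $D$ on $V(G)$---with an ``up'' arc $m^{\alpha_i}\to m^{\beta_i}$ for each $x_i$ and a ``down'' arc $m^{\delta_j}\to m^{\gamma_j}$ for each $y_j$---has equal in- and out-degree at every vertex.

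Next, I would invoke an Eulerian analysis of $D$. After cancelling every parallel pair consisting of one up-arc and one down-arc on the same $G$-edge, the reduced digraph $D'$ still has balanced in- and out-degrees. If $D'$ is empty, every $x$-arc pairs off with a $y$-arc on the same $G$-edge, giving outcome~\ref{it:clma} after sorting. Otherwise $D'$ contains a directed cycle, which projects to a simple cycle in $G$ of length $\ge 2k$ by property~\ref{it:n2}; since $D'$ itself has at most $s+t\le 2k$ arcs, no cancellation can have occurred and $D = D'$ consists of a single directed $2k$-cycle. Its projection is a $2k$-cycle $C$ in $G$, so by~\ref{it:n3} it lies in a unique copy $\Theta^{\star}$ of $\Theta_{k,\ell}$, and $C$ is the union of two ascending paths $P_j$ and $P_{j'}$ ($j \ne j' \in [\ell]$) from $a_0 = \min V(C)$ to $a_k = \max V(C)$.

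To finish outcome~\ref{it:clmb}, I would carry out a local in/out-balance check at every vertex of $C$: at $a_0$ and $a_k$ both incident edges of $C$ lie on the same vertical side, so exactly one of them is an up-arc and the other a down-arc; at every internal vertex of $P_j$ or $P_{j'}$ the two incident edges of $C$ point in opposite vertical directions, so balance forces them to carry the same type. Propagating along the paths, all edges of one of $P_j, P_{j'}$ are up-arcs (the $x_i$'s) and all of the other are down-arcs (the $y_i$'s), so $s = t = k$. Matching the sorted order with the path order rests on the observation that along any ascending path in $V(G)$ the consecutive differences $m^{e_i} - m^{e_{i-1}}$ are strictly increasing, because $m^{e_{i+1}} - m^{e_i} \ge (m-1)m^{e_i} > m^{e_i} > m^{e_i} - m^{e_{i-1}}$. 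Distinctness of all $2k$ terms then follows from the injectivity of $(p,q)\mapsto m^p - m^q$ for $p > q$---a routine base-$m$ argument---together with the fact that $P_j$ and $P_{j'}$ meet only at $a_0$ and $a_k$; uniqueness of $\Theta^{\star}$ and of the pair $(j, j')$ is immediate from~\ref{it:n3}.

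The main obstacle I expect is the local parity argument that pins down the $x$-edges and $y$-edges as precisely the two ascending paths $P_j$ and $P_{j'}$ of $\Theta^{\star}$; the remaining ingredients---setting up the balance relation via Fact~\ref{f:trivial}, the Eulerian decomposition of $D$, and the base-$m$ bookkeeping---are essentially routine.
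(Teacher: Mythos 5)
Your proposal is correct and takes essentially the same approach as the paper: the base-$m$ balance condition extracted via Fact~\ref{f:trivial}, a cycle forced among at most $2k$ edges so that the girth bound yields a single $2k$-cycle with all terms distinct, the unique ambient copy of $\Theta_{k,\ell}$, and the strictly increasing consecutive differences to match the sorted order with the path order. Your directed-multigraph cancellation, Eulerian cycle extraction, and up/down propagation are just a repackaging of the paper's induction on $s+t$ to remove common terms, its minimum-degree argument in the graph $M$, and its second invocation of the balance condition $f=g$.
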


	\begin{proof}
		We may assume
		\begin{equation}\label{eq:0257}
			\{x_1, \dots, x_s\}\cap \{y_1, \dots, y_t\}=\varnothing\,,
		\end{equation}
		because otherwise we can delete two identical terms from the given equation
		and apply induction on $s+t$, thus ending up in case~\ref{it:clma}.

		Since every integer can be expressed in at most one way as a difference of
		two powers of~$m$, for every $x\in X$ there exists a unique edge $e(x)$ of $G$ with
		\[
			x=\max e(x)-\min e(x)\,.
		\]
		Setting
		\[
			f(n)=\big|\{i\in [s]\colon \max e(x_i)=m^n\}\big|-\big|\{i\in [s]\colon \min e(x_i)=m^n\}\big|
		\]
		for every positive integer $n$, we have $|f(n)|\le s$ and
		\[
			x_1+\dots+x_s=\sum_{n=1}^\infty f(n)m^n\,,
		\]
		where the sum on the right side has only finitely many nonzero terms.
		Similarly, we write
		\[
			y_1+\dots+y_t=\sum_{n=1}^\infty g(n)m^n\,,
		\]
		where the function $g$ is defined analogously by
		\[
			g(n)=\big|\{i\in [t]\colon \max e(y_i)=m^n\}\big|-\big|\{i\in [t]\colon \min e(y_i)=m^n\}\big|
		\]
		and satisfies $|g(n)|\le t$.
		Now the given equality $x_1+\dots+x_s=y_1+\dots+y_t$ entails
		\[
			\sum_{n=1}^\infty \bigl(f(n)-g(n)\bigr)m^n=0\,.
		\]
		Since  for every $n\in\NN$ we have $|f(n)-g(n)|\le|f(n)|+|g(n)|\le s+t\le 2k<m$,
		Fact~\ref{f:trivial} tells us that $f=g$.

		We consider the graph $M\subseteq G$ with edge set
		$E(M)=\{e(x_1), \dots, e(x_s), e(y_1), \dots, e(y_t)\}$.
		If $M$ had a vertex $m^n$ of degree one,
		then $f(n)=g(n)$ would yield a contradiction to~\eqref{eq:0257}.
		Thus $M$
		contains a cycle $C$ and by~\ref{it:n2} the length of $C$ is at least $2k$.
		On the other hand, we know $e(C)\le e(M)=s+t\le 2k$,
		and thus we must have equality throughout. In particular, the $2k$ terms
		$x_1,\dots,x_s,y_1,\dots,y_t$ are distinct.

		By~\ref{it:n3} there is a unique copy
		of $\Theta_{k, \ell}$ in $G$ containing $C$. Denote the vertices of this
		copy in the usual manner by $a_0$, $a_k$, and $a^{(j)}_i$,
		where $(i, j)\in [k-1]\times [\ell]$. Invoking $f=g$ once more, one easily sees
		$s=t=k$ and that there exist distinct indices
		$j, j'\in [\ell]$ such that $\{e(x_1), \dots, e(x_k)\}$
		and $\{e(y_1), \dots, e(y_k)\}$ are the edge sets of the
		paths
		\[
			a_0a^{(j)}_1\cdots a^{(j)}_{k-1}a_k
			\qand
			a_0a^{(j')}_1\cdots a^{(j')}_{k-1}a_k\,,
		\]
		respectively.
		Since the numbers $a_0<a^{(j)}_1<\dots<a^{(j)}_{k-1}<a_k$ are powers of $m$,
		their consecutive differences $a^{(j)}_1-a_0, \dots, a_k-a^{(j)}_{k-1}$ form
		a strictly increasing sequence and thus we have $x_i=a^{(j)}_i-a^{(j)}_{i-1}$
		for every $i\in [k]$. The same argument also yields $y_i=a^{(j')}_i-a^{(j')}_{i-1}$.

		We have thereby found a copy of $\Theta_{k, \ell}$ such that the
		summands $x_1, \dots, x_k$ and $y_1, \dots, y_k$ can be expressed as in
		case~\ref{it:clmb} of the claim. It remains to observe that these summands
		determine the cycle $C$ and, therefore, the copy of $\Theta_{k, \ell}$
		in a unique manner.
	\end{proof}

	Claim~\ref{clm:31} clearly implies the clauses~\ref{iti:Bh}\,--\,\ref{iti:distinct}
	of Theorem~\ref{thm:B-set}. Moreover, since $G$ needs to contain at least one
	copy of $\Theta_{k, \ell}$, there is at least one natural number $n$ satisfying
	$\rho_{X,k}(n)=\ell$ and thus $X$ is indeed a $B_{k, \ell}$-set. The reason
	why the partition property~\ref{it:n1} leads to clause~\ref{iti:B-set-EN} was
	already alluded to in the previous section and altogether it only remains to
	address~\ref{iti:B-set-AE}.

	Let us recall to this end that the map $ab\longmapsto |a-b|$ establishes
	a bijection between~$E(G)$ and~$X$. Combined with Claim~\ref{clm:31} this
	reduces our task to the verification of the following statement:

	\begin{quotation}
		\it
		For every finite set $E'\subseteq E(G)$ there is a subset $E''\subseteq E'$
		of size ${|E''|\ge\frac{k-1}{2k}|E'|}$ such that the graph $(\NN, E'')$ contains
		no ascending path of length $k$.
	\end{quotation}

	Given $E'$ a standard averaging argument yields a partition of $\NN$ into $k$
	classes $V_1, \dots, V_k$ such that the subset $P\subseteq E'$ consisting of all
	edges connecting vertices from distinct classes satisfies $|P|\ge\frac{k-1}k |E'|$.
	We subdivide $P$ further into the two sets
	\[
		P'=\{xy\in P\colon x<y \text{ and there are } i<j
		\text{ with } x\in V_i, y\in V_j\}
	\]
	and
	\[
		P''=\{xy\in P\colon x<y \text{ and there are } i>j
		\text{ with } x\in V_i, y\in V_j\}\,.
	\]
	Neither of the graphs $(\NN, P')$ or $(\NN, P'')$ can contain an ascending path
	of length $k$. Moreover, due to $|P'|+|P''|=|P|\ge \frac{k-1}k |E'|$ we have
	$\max\{|P'|, |P''|\}\ge \frac{k-1}{2k}|E'|$ and, hence, at least one of the
	choices $E''=P'$ or $E''=P''$ is permissible. Thereby Theorem~\ref{thm:B-set}
	is proved.
\end{proof}

\begin{bibdiv}
	\begin{biblist}

		\bib{AE85}{article}{
			author={Alon, N.},
			author={Erd\H{o}s, P.},
			title={An application of graph theory to additive number theory},
			journal={European J. Combin.},
			volume={6},
			date={1985},
			number={3},
			pages={201--203},
			issn={0195-6698},
			review={\MR{818591}},
			doi={10.1016/S0195-6698(85)80027-5},
		}

		\bib{Ed65}{article}{
			author={Edmonds, Jack},
			title={Minimum partition of a matroid into independent subsets},
			journal={J. Res. Nat. Bur. Standards Sect. B},
			volume={69},
			date={1965},
			pages={67--72},
			issn={0022-4340},
			review={\MR{190025}},
		}

		\bib{E80}{article}{
			author={Erd\H{o}s, P.},
			title={Some applications of Ramsey's theorem to additive number theory},
			journal={European J. Combin.},
			volume={1},
			date={1980},
			number={1},
			pages={43--46},
			issn={0195-6698},
			review={\MR{576765}},
			doi={10.1016/S0195-6698(80)80020-5},
		}

		\bib{Fo70}{article}{
			author={Folkman, Jon},
			title={Graphs with monochromatic complete subgraphs in every edge
					coloring},
			journal={SIAM J. Appl. Math.},
			volume={18},
			date={1970},
			pages={19--24},
			issn={0036-1399},
			review={\MR{0268080}},
			doi={10.1137/0118004},
		}

		\bib{GRS90}{book}{
			author={Graham, Ronald L.},
			author={Rothschild, Bruce L.},
			author={Spencer, Joel H.},
			title={Ramsey theory},
			series={Wiley-Interscience Series in Discrete Mathematics and
					Optimization},
			edition={2},
			note={A Wiley-Interscience Publication},
			publisher={John Wiley \& Sons, Inc., New York},
			date={1990},
			pages={xii+196},
			isbn={0-471-50046-1},
			review={\MR{1044995}},
		}

		\bib{HR83}{book}{
			author={Halberstam, Heini},
			author={Roth, Klaus Friedrich},
			title={Sequences},
			edition={2},
			publisher={Springer-Verlag, New York-Berlin},
			date={1983},
			pages={xviii+292},
			isbn={0-387-90801-3},
			review={\MR{687978}},
			doi={10.1007/978-1-4613-8227-0},
		}

		\bib{Ho55}{article}{
			author={Horn, Alfred},
			title={A characterization of unions of linearly independent sets},
			journal={J. London Math. Soc.},
			volume={30},
			date={1955},
			pages={494--496},
			issn={0024-6107},
			review={\MR{71487}},
			doi={10.1112/jlms/s1-30.4.494},
		}

		\bib{NR85}{article}{
			author={Ne\v set\v ril, J.},
			author={R\"{o}dl, V.},
			title={Two proofs in combinatorial number theory},
			journal={Proc. Amer. Math. Soc.},
			volume={93},
			date={1985},
			number={1},
			pages={185--188},
			issn={0002-9939},
			review={\MR{766553}},
			doi={10.2307/2044579},
		}

		\bib{NRS}{article}{
			author={Ne\v set\v ril, J.},
			author={R\"{o}dl, V.},
			author={Sales, M.},
			title={On Pisier type theorems},
			journal={Combinatorica},
			note={To appear},
			doi={10.1007/s00493-024-00115-1},
		}

		\bib{Pi83}{article}{
			author={Pisier, G.},
			title={Arithmetic characterizations of Sidon sets},
			journal={Bull. Amer. Math. Soc. (N.S.)},
			volume={8},
			date={1983},
			number={1},
			pages={87--89},
			issn={0273-0979},
			review={\MR{682829}},
			doi={10.1090/S0273-0979-1983-15092-9},
		}

		\bib{RR}{article}{
			author={Reiher, Chr.},
			author={R\"odl, V.},
			title={The girth Ramsey theorem},
			eprint={2308.15589},
		}
		
		\bib{RRS}{article}{
   		author={Reiher, Chr.},
   		author={R\"{o}dl, V.},
   		author={Sales, M.},
   		title={Colouring versus density in integers and Hales-Jewett cubes},
   		journal={J. Lond. Math. Soc. (2)},
   		volume={110},
   		date={2024},
			number={5},
   		pages={24pp},
   		issn={0024-6107},
   		review={\MR{4810571}},
   		doi={10.1112/jlms.12987},
		}

		\bib{RRSS}{article}{
			author={Reiher, Chr.},
			author={R\"odl, V.},
			author={Sales, M.},
			author={Schacht, M},
			title={Relative Tur\'an densities of ordered graphs},
			eprint={2501.06853},
			note={Submitted},
		}

		\bib{RR95}{article}{
			author={R\"{o}dl, Vojt\v{e}ch},
			author={Ruci\'{n}ski, Andrzej},
			title={Threshold functions for Ramsey properties},
			journal={J. Amer. Math. Soc.},
			volume={8},
			date={1995},
			number={4},
			pages={917--942},
			issn={0894-0347},
			review={\MR{1276825}},
			doi={10.2307/2152833},
		}

		\bib{Si32}{article}{
			author={Sidon, S.},
			title={Ein Satz \"{u}ber trigonometrische Polynome und seine Anwendung in der
				Theorie der Fourier-Reihen},
			journal={Math. Ann.},
			volume={106},
			date={1932},
			number={1},
			pages={536--539},
			issn={0025-5831},
			review={\MR{1512772}},
			doi={10.1007/BF01455900},
		}

	\end{biblist}
\end{bibdiv}

\end{document}